\numberwithin{equation}{section} 
\newcounter{mnote}
\theoremstyle{plain}
\newtheorem{theorem}{Theorem}[section]
\newtheorem{proposition}[theorem]{Proposition}
\newtheorem{lemma}[theorem]{Lemma}
\theoremstyle{definition}
\newtheorem{definition}[theorem]{Definition}
\theoremstyle{remark}
\newcommand{\od}[2]{\frac{d #1}{d #2}}
\newcommand{\norm}[1]{\left\lVert#1\right\rVert}
\newcommand*{\ang}[1]{\left\langle #1 \right\rangle}
\def\aa{\alpha}
\def\ll{\lambda}
\def\Dd{\Delta}
\def\Gg{\Gamma}
\def\Om{\Omega}
\def\pp{\partial}
\begin{document}
\title[On the Charney Conjecture of Data Assimilation]{ On the Charney Conjecture of Data Assimilation Employing Temperature Measurements Alone:  The Paradigm of 3D Planetary Geostrophic Model}

\date{August 16, 2016}

%
\author{Aseel Farhat}
\address[Aseel Farhat]{Department of Mathematics\\
               University of Virginia\\
       Charlottesville, VA 22904, USA}
\email[Aseel Farhat]{af7py@virginia.edu}
\author{Evelyn Lunasin}
\address[Evelyn Lunasin]{Department of Mathematics\\
                United States Naval Academy\\
        Annapolis, MD, 21402 USA}
\email[Evelyn Lunasin]{lunasin@usna.edu}

\author{Edriss S. Titi}
\address[Edriss S. Titi]{Department of Mathematics, Texas A\&M University, 3368 TAMU,
 College Station, TX 77843-3368, USA.  {\bf ALSO},
  Department of Computer Science and Applied Mathematics, Weizmann Institute
  of Science, Rehovot 76100, Israel.} \email{titi@math.tamu.edu and
  edriss.titi@weizmann.ac.il}

\begin{abstract}
Analyzing the validity and success of a data assimilation algorithm when some state variable observations are not available is an important problem in meteorology and engineering. We present an improved data assimilation  algorithm for recovering the exact full reference solution (i.e.~the velocity and temperature) of the 3D Planetary Geostrophic model, at an exponential rate in time, by employing  coarse spatial mesh observations of the temperature alone. This provides, in the case of this paradigm, a rigorous justification to an earlier conjecture of Charney which states that temperature history of the atmosphere, for certain simple atmospheric models, determines all other state variables.\end{abstract}

\maketitle
 {\bf MSC Subject Classifications:} 35Q30, 93C20, 37C50, 76B75, 34D06. \\
{\bf Keywords:} Planetary Geostrophic model, data assimilation, nudging, downscaling, Charney's conjecture.\\

Numerical models for geophysical process, such as the primitive-equation, require accurate initialization process in order to make accurate predictions. The initialization process depends on how acquired observations, such as temperature and velocity measurements, for example, are properly interpolated in space and time, to attempt to complete the information across the space-time domain, while maintaining the dynamical balance between these fields.  Direct observations of these fields may be readily available but often not uniformly distributed in space and are very sparse. In particular, the errors contained in the measurements and the model parameters, combined with the highly nonlinear property of the governing model equations, makes basic interpolation not a good starting point when designing an initial condition even for short term prediction. Meteorologists have devised several diagnostic tests when designing accurate initialization procedures that minimizes the loss of information from the acquired data.  For example, the interpolated function in space and time must satisfy the conservation laws in the continuous model equations. Meteorologists also use different combinations of collected information about the state of the system to see which combinations will yield the system close to the collected data. In this process, they may also use other forms of collected measurements such as, the average temperature of a small region obtained through image processing of satellite observation. One must know how to properly make use of this information, in particular when the collected data is not one of the evolving state variable in the numerical model.

In the context of meteorology and atmospheric physics,  data assimilation algorithm  when some state variable observations are not available as an input, has been studied in \cite{Charney1969, Errico85, Hoke-Anthes1976, LorencTibaldi80}  for some simplified numerical forecast models.   Although nonlinear interactions between the scales of motion and the parameters in the system exist, it has been shown in several settings that if the dynamical model used as a source of apriori information captures the important properties of the system being modeled, then one can identify the full state of the system knowing only coarse observation from an partial data that is selected properly.  For example, the numerical experiment of Charney in  \cite{Charney1969}, confirms that wind and surface pressure can be determined from coarse mesh measurements of temperatures alone.   The numerical experiments  of Lorenc and Tibaldi in \cite{LorencTibaldi80} showed that frontal humidity fields can be determined from height and  wind data.  Their numerical experiments report on the influence of the distance between observations, i.e. the size of the grid points, the different combinations of the data being assimilated, and the effect of measurement errors in the accuracy of the initialization process.  The experiments of \cite{Charney1969} and \cite{ Hoke-Anthes1976} which use the nudging data assimilation scheme, for example, were also able to approximate the size of the relaxation time scale $\frac{1}{\mu}$ in order for the dynamics to adjust properly to the observations.  They have noted that if $\mu$ is set too small, the errors in the observations can get too large which results in  the nudging term infective to control the instabilities.   On the other hand, if $\mu$ is set too large, the dynamics will not have enough time to relax to the observation values.  In this work we will give estimates using rigorous analysis that properly balances the effect of the relaxation parameter and other physical parameters in order to get an accurate initial condition using a data assimilation algorithm for the 3D planetary geostrophic model. On a similar note, it is worthwhile to mention some recent results in \cite{Law2016}, where the authors have derived rigorous conditions for an ODE system that partial observations must have in order to control the inherent uncertainty due to chaos.  The authors in \cite{Law2016} presents a summary of 3DVAR  for continuous and discrete time observations that highlights some important connection between 3DVAR, nudging and direct insertion (which they call as synchronization filter€) data assimilation schemes arising from various limiting conditions.

In \cite{FLT_horizontal}  we proposed a data assimilation  algorithm for a two-dimensional B\'enard convection problem: two-dimensional Boussinesq system of a layer of incompressible fluid between two solid horizontal walls, with no-normal flow and stress free boundary condition on the walls, and fluid is heated from the bottom and cooled from the top. We incorporate the observables as a feedback (nudging) term in the evolution equation of the {\it horizontal} velocity alone. We show that under an appropriate choice of the nudging parameter and the size of the spatial coarse mesh observables, and under the assumption that the observed data is error free, the solution of the proposed algorithm converges at an exponential rate to the unique exact unknown reference solution of the original system, associated with the observed data on the horizontal component of the velocity.  For this system we conjecture that we may not able to show that incomplete historical data on the temperature alone can determine the full state of the system.   Recent numerical studies in \cite{Altaf} shows support of this conjecture.

On the other hand in \cite{FLT_porous} we show that for the B\'enard convection in porous media one only needs to use  discrete spatial-mesh measurements of the temperature to show that the  solution of the proposed algorithm converges at an exponential rate in time, to the unique exact unknown reference solution of the original system, associated with the observed finite dimensional projection of temperature data.

Charney's question in \cite{Charney1969} of whether temperature observations are enough to determine all the dynamical state of the system results, in many ways, motivated a series of our recent studies, for example, see \cite{ Altaf, FLT2015} and references therein.   In this work we introduce a  data assimilation algorithm for the 3D Planetary Geostrophic model that {\it requires observations of temperature only}. This gives a rigorous support to an earlier conjecture of Charney that temperature observations can determine the dynamical variables in the system for certain simple atmospheric models.


In this article we consider the following planetary geostrophic
viscous model for oceanic and atmosphere dynamics (see, e.g. \cite{PG_Cao_Titi}, \cite{Cao_Titi_Ziane},\cite{STW98}, \cite{SV97A}, \cite{SV97B}, \cite{Vallis})
\begin{subequations}\label{PG1}
\begin{align}
&\nabla p + f \vec{k} \times u +  L_1 u = 0,  \label{EQ-1}  \\
&\pp_z p  + T =0,    \label{EQ-2}  \\
&\nabla \cdot u +\pp_z w =0,   \label{EQ-3} \\
&\pp_t T + u \cdot \nabla T
+ w \pp_z T +L_2 T = Q.  \label{EQ-4}
\end{align}
For simplicity, we focus in the case of ocean dynamics and consider the above system in the domain
\[
\Om= M \times (-H, 0)\subset \mathbb{R}^3,
\]
where $M$ is a bounded smooth domain in $\mathbb{R}^2,$  or the square
$M=(0,1) \times (0,1).$  Here $u=(u_1, u_2)$, and $(u_1, u_2, w)$
is the velocity field, $T$ is the temperature,
and $p$ is the pressure. $f=f_0(\beta+y)$ is the Coriolis parameter,
and $Q$ is a heat  source.
The dissipation operators $L_1$ and $L_2$ are given by
\begin{align*}
& L_1 = -A_h \Dd -A_v \pp^2_z,    \\
& L_2 = -K_h \Dd -K_v \pp^2_z,
\end{align*}
where
$A_h$ and  $A_v$ are positive molecular viscosities, and
$K_h$ and $K_v$ are positive conductivity constants.
We set $\nabla p = (\pp_x p, \pp_y p), \nabla \cdot u  =
\pp_x u_1+\pp_y u_2$ and $\Dd = \pp_x^2 +\pp_y^2.$
We denote the different parts of the boundary of $\Om$ by:
\begin{align*}
&\Gg_u = \{ (x,y,z)  \in \Om : z=0 \},  \\
&\Gg_b = \{ (x,y,z)  \in \Om : z=-H \},  \\
&\Gg_s = \{ (x,y,z)  \in \Om : (x,y) \in \partial M  \}.
\end{align*}
We equip system (\ref{EQ-1})--(\ref{EQ-4}) with
the following boundary conditions -- with wind-driven stress on the top
surface and stress-free and non-flux on the side walls and bottom
(see, e.g., \cite{PJ87}, \cite{SR97},
\cite{SV97A}, \cite{SV97B}, \cite{SD96}):
\begin{align}
&\mbox{on } \Gg_u: A_v  \frac{\pp u }{\pp z} = \tau,  \;
w=0, \; -K_v \frac{\pp T }{\pp z} = \aa (T-T^*);
\label{B-1}\\
&\mbox{on } \Gg_b: \frac{\pp u }{\pp z} = 0, \; w=0, \;
 \frac{\pp T }{\pp z} = 0;
\label{B-2}\\
&\mbox{on } \Gg_s:
u \cdot \vec{n} =0, \; \frac{\pp v}{\pp \vec{n}}  \times \vec{n} =0,
\;
\frac{\pp T }{\pp \vec{n}} =0,   \label{B-3}
\end{align}
where $\tau (x,y)$ is the given wind stress, $\vec{n}$ is the normal
vector of $\Gg_s$,
$T^* (x,y)$ is typical temperature of the top (upper) surface, and
$\aa > 0$ is a positive constant.
Due to the boundary conditions (\ref{B-1})--(\ref{B-3}),
it is natural to assume that $T^*$ satisfies the compatibility
boundary  condition:
\begin{equation}
\frac{\pp T^* }{\pp \vec{n}} =0   \qquad
\mbox{on  } \pp M.
\label{COM}
\end{equation}
In addition, we supply system \eqref{EQ-1}-\eqref{COM} with the initial condition:
\begin{equation}
T(x,y,z,0) = T_0 (x,y,z).  \label{INIT}
\end{equation}
\end{subequations}

\subsection{Alternate Formulation}
Following  \cite{STW98}, we can derive an equivalent
formulation for the system
(\ref{EQ-1})--(\ref{INIT}). By integrating equation (\ref{EQ-3})
in the $z$ direction, we obtain
\begin{equation}
w(x,y,z,t) = w(x,y,-H,t) - \int_{-H}^z \nabla \cdot u(x,y, \xi,t) d\xi.
\label{WWWW}
\end{equation}
Since $w(x,y,z,t)=0$ at $z=-H, 0$ (see (\ref{B-1}) and (\ref{B-2})),
we have
\begin{equation}
w(x,y,z,t) =  - \int_{-H}^z \nabla \cdot u(x,y, \xi,t) d\xi
\label{DIV-1}
\end{equation}
and
\begin{equation}
\int_{-H}^0 \nabla \cdot u(x,y, \xi,t) d\xi
= \nabla \cdot  \int_{-H}^0 u(x,y, \xi,t) d\xi =0.
\label{DIV}
\end{equation}
By integrating equation (\ref{EQ-2}) with respect to $z$
we obtain
\begin{equation}
p(x,y,z,t) = - \int_{-H}^z T(x,y,\xi,t) d\xi + p_s(x,y,t),
\label{PPPP}
\end{equation}
where $p_s(x,y,t)$ is a free function (the bottom pressure) to be determined.
Moreover,  notice that by setting
\begin{equation}
T=T^*+\widetilde{T}.  \label{TTTT}
\end{equation}
we convert the boundary condition (\ref{B-1}) to be homogeneous,
namely, $\widetilde{T}$ satisfies the following homogeneous
boundary conditions:
\begin{equation}
\left. \frac{\pp \widetilde{T}}{\pp z }
\right|_{z=-H}= 0;\quad
\left. {\left( \frac{\pp \widetilde{T}}{\pp z}
+ \frac{\aa}{K_v} \widetilde{T} \right) }
\right|_{z= 0}= 0;\quad
\; \left. \frac{\pp \widetilde{T}}{\pp \vec{n}} \right|_{\Gg_s}= 0,
\label{HB}
\end{equation}
(notice that we have also used the compatibility condition
(\ref{COM})).
Based on all the above  we get the following new formulation for
system (\ref{EQ-1})--(\ref{INIT}):
\begin{subequations}\label{PG}
\begin{align}
&\nabla  \left[ p_s(x,y,t) - \int_{-H}^z \widetilde{T} (x,y,\xi,t) d\xi
-  (z+H) T^* (x,y,t) \right]+ f \vec{k} \times u +  L_1 u  = 0,  \label{TEQ-1}   \\
&\nabla \cdot \int_{-H}^0  u(x,y,z,t) \; dz = 0,    \label{TEQ-11}     \\
&\pp_t \widetilde{T} +L_2 \widetilde{T}
+ u \cdot \nabla \widetilde{T}
-\left( \nabla \cdot \int_{-H}^z  u(x,y,\xi,t) \,d\xi
\right)  \pp_z \widetilde{T}  +   u  \cdot \nabla T^* =Q^*,  \label{TEQ-2}   \\ 
&\left. \frac{\pp u }{\pp z} \right|_{z=0} = \tau, \;
\left. \frac{\pp u }{\pp z} \right|_{z=-H} = 0, \;
\left. u\cdot \vec{n} \right|_{\Gg_s} = 0, \;
\left. \frac{\pp u}{\pp \vec{n}}  \times \vec{n}\right|_{\Gg_s}  =0,
\label{TEQ-3} \\
&\left. { \left( \pp_z \widetilde{T}+ \frac{\aa}{K_v}  \widetilde{T}
 \right)} \right|_{z=0}= 0; \;
\left. \pp_z \widetilde{T}  \right|_{z=-H}= 0;
\; \left. \frac{\pp}{\pp \vec{n}} \widetilde{T}  \right|_{\Gg_s}= 0,
 \label{TEQ-4} \\
&\widetilde{T} (x,y,z,0) = \widetilde{T}_0= T_0 (x,y,z)-T^*(x,y),
\label{TEQ-5}
\end{align}
where
\begin{equation}
Q^* = Q +  K_h \Dd  T^*.
\end{equation}
\end{subequations}
In the above system the unknowns are the vector field
$u(x,y,z,t)$, and the scalar functions $p_s(x,y,t)$ and
$\widetilde{T} (x,y,z,t)$; while $T^*, \tau, Q^*$ and
$\widetilde{T}_0$ are given.

It is clear that once we determine $u(x,y,z,t), p_s(x,y,t)$ and
$\widetilde{T} (x,y,z,t)$ we can easily recover, thanks to
(\ref{WWWW}), (\ref{PPPP}) and (\ref{TTTT}), the original unknowns of
system (\ref{EQ-1})--(\ref{INIT}), i.e., $(u,w), T$ and $p$,
which makes the new formulation equivalent to the original system
(\ref{EQ-1})--(\ref{INIT}).  For the mathematical theory and global well-posedness of this model we direct the reader to \cite{PG_Cao_Titi, STW98}.

The purpose of this article is to introduce and analyse a data assimilation (downscaling) algorithm for recovering the solution $u$ and $T$ of system \eqref{PG1} from coarse spatial measurements of the temperature $T$ alone, in the absence of intial condition $T_0$.  Equivalently, we consider system \eqref{PG}.  Suppose that the coarse measurement of $T$ or equivalently of $\widetilde{T}$ are represented by the interpolant operator $I_h(\widetilde{T}), where $h$ is the size of coarse spatial mesh of the observation $ (see, e.g. \cite{Azouani_Olson_Titi} for details).  We propose the following algorithm for the approximate velocity $v$, temperature $\eta$ and pressure $q_s$, that are approximating the unknown reference solution $T$, $u$ and $p_s$, respectively:
\begin{subequations}\label{DA_PG}
\begin{align}
&\nabla  \left[ q_s(x,y,t) - \int_{-H}^z \eta  (x,y,\xi,t) d\xi
-  (z+H) T^* (x,y,t) \right]+ f \vec{k} \times v+  L_1 v  = 0,  \label{DA_TEQ-1}   \\
&\nabla \cdot \int_{-H}^0  v(x,y,z,t) \; dz = 0,    \label{DA_TEQ-11}     \\
&\pp_t \eta  +L_2 \eta
+ v \cdot \nabla \eta
-\left( \nabla \cdot \int_{-H}^z  v(x,y,\xi,t) \,d\xi
\right)  \pp_z \eta   +   v  \cdot \nabla T^*  \notag \\
& \hspace{2.8in}=Q^* - \mu \left(I_h(\eta) - I_h(\widetilde{T})\right),  \label{DA_TEQ-2}   \\
&\left. \frac{\pp v }{\pp z} \right|_{z=0} = \tau, \;
\left. \frac{\pp v }{\pp z} \right|_{z=-H} = 0, \;
\left. v\cdot \vec{n} \right|_{\Gg_s} = 0, \;
\left. \frac{\pp v}{\pp \vec{n}}  \times \vec{n}\right|_{\Gg_s}  =0,
\label{DA_TEQ-3} \\
&\left. { \left( \pp_z \eta + \frac{\aa}{K_v}  \eta
 \right)} \right|_{z=0}= 0; \;
\left. \pp_z \eta   \right|_{z=-H}= 0;
\; \left. \frac{\pp}{\pp \vec{n}} \eta   \right|_{\Gg_s}= 0,
 \label{DA_TEQ-4} \\
&\eta  (x,y,z,0) =  \eta_0,
\label{DA_TEQ-5}
\end{align}
where
\begin{equation}
Q^* = Q +  K_h \Dd  T^*.
\end{equation}
\end{subequations}
The unknowns are the vector field
$v(x,y,z,t)$, and the scalar functions $q_s(x,y,t)$ and
$\eta (x,y,z,t)$; while $T^*$ and $ \tau, Q^*$are given.
Here, $\eta_0$ can be taken arbitrary and $I_h(\cdot)$ is a linear interpolant operator based on the observational measurements  on a coarse spatial resolution of size $h$, for $t\in [0,T]$. Let us denote by $L^2 (\Om)$ and $H^1(\Om), H^2(\Om), \cdots,$
the usual $L^2-$Lebesgue and Sobolev spaces, respectively (see, e.g.  \cite{Evans} and \cite{ZW89}). Two types of interpolants can be considered.
One is to be given by a linear interpolant operator $I_h: H^{1}(\Omega) \rightarrow L^{2}(\Omega)$ satisfying
 the approximation property
\begin{align}\label{app}
\norm{\varphi - I_h(\varphi)}_{L^2(\Omega)}^2 \leq c_0h^2\norm{\varphi}_{H^1(\Omega)}^2,
\end{align}
for every $\varphi \in H^{1}(\Omega)$, where $c_0>0$ is a dimensionless constant.
The other type is given by $I_h: H^{2}(\Omega)\rightarrow L^{2}(\Omega)$, together with
\begin{align}\label{app2}
\norm{\varphi - I_h(\varphi)}_{L^{2}(\Omega)}^2 \leq c_0h^2\norm{\varphi}_{H^{1}(\Omega)} + c_0^2h^4\norm{\varphi}_{H^{2}(\Omega)}^2,
\end{align}
for every $\varphi \in H^{2}(\Omega)$, where $c_0>0$ is a dimensionless constant.

To give an example of an interpolant operator that satisfies \eqref{app}, we consider the positive definite, self-adjoint Laplace operator $(-\Delta)$ for the temperature with the corresponding boundary condition  \eqref{DA_TEQ-4}.  This linear operator has a compact inverse $(-\Delta)^{-1}:L^2(\Om)\rightarrow L^2(\Om)$, thus there exist a complete orthonormal set of eigenfunctions $\{w_j\}_{j=1}^\infty \subset L^2(\Om)$ such that $-\Delta w_j = \lambda_j w_j$, where $0 < \lambda_j \leq \lambda_{j+1}$ for $j\in \mathbb{N}$.    Since we can order the eigenvalues  we can let $I_h$ to be the orthogonal projection of $L^2(\Omega)$ onto the linear subspace spanned by the first $m_h$ eigenfunctions $\{w_1, w_2, \dots w_{m_h}\}$, where $m_h$ is chosen large enough so that the corresponding eigenvalue $(\lambda_{m_h})^{-1} \leq h^{-2}$.  In the case of periodic boundary conditions, an example of an interpolant observable that satisfies \eqref{app}, is the orthogonal projection onto the linear subspace spanned by the low Fourier modes with wave numbers $k$ such that $|k|\leq m_h = 1/h$.
   Physically relevant example is based on volume elements measurements that was studied in {\cite{Azouani_Olson_Titi, Jones_Titi}}. Examples of an interpolant observable that satisfies \eqref{app2} are given by the low Fourier modes and the measurements at a discrete set of nodal points in $\Omega$ (see Appendix A in {\cite{Azouani_Olson_Titi})}. We are not treating the second type of interpolants in this paper only for the simplicity of presentation. For full details on the analysis for the second type of interpolants we refer to { \cite{Azouani_Olson_Titi, FJT, FLT2015, FLT_horizontal}}.

The aim of this paper is to analyse system \eqref{DA_PG} and to show that its solutions converge, at an exponential rate in time, to the unknown corresponsing reference solution $\widetilde{T}$, $u$ and $p_s$ of \eqref{PG}.  It is worth mentioning that by combining the tools developed in this paper with those in \cite{Bessaih_Olson_Titi} we can treat in a similar way the case when the measurements are contaminated with a noisy stochastic error.  Furthermore, employing  the ideas in \cite{Foias_Mondaini_Titi} with the tools developed here one can treat in a similar fashion the case when we have fully discrete measurements in space and time.  That is, the case where the coarse spatial mesh measurements are collected at discrete times, $\left\{t_j\right\}_{j=1}^\infty$, provided $|t_{j+1 - t_j}|\leq \kappa$, for $\kappa$ small enough depending on the physical parameters.  We avoid here the treatment of the most general case in order to simplify the presentation.  However, the combination of the ideas from this work and those of \cite{Foias_Mondaini_Titi} is almost straightforward, yet tedious.

\bigskip
\section{Preliminaries and Functional Setting}  \label{S-1}

\subsection{Functional spaces and  relevant inequalities}

We denote by
\begin{equation}
| T | = \left(  \int_{\Om}  |T  (x,y,z)|^2 \; dxdydz
\right)^{\frac{1}{2}},   \label{L2}
\end{equation}
for every $T\in L^2(\Om)$,  and by
\begin{eqnarray}
&&\| T \| = \left( \aa \int_{\Gg_u} | T(x,y,0)|^2 dxdy+
 \int_{\Om} {\left[K_h |\nabla T (x,y,z)|^2 + \right.}\right. \label{NORM}
 \\
&&\hskip0.5in
 +\left. { \left.
{ K_v \left| \pp_z T (x,y,z) \right|^2} \right] }\; dxdydz
\right)^{\frac{1}{2}},   \nonumber
\end{eqnarray}
for every $T \in H^1 (\Om)$. Let
\begin{eqnarray*}
\widetilde{\mathcal{V}} &=&  \left\{ { \widetilde{T} \in
C^{\infty}(\overline{\Om}):
\left. \frac{\pp \widetilde{T}}{\pp z }
\right|_{z=-H}= 0;
\left. {\left( \frac{\pp \widetilde{T}}{\pp z}
+ \frac{\aa}{K_v} \widetilde{T} \right) }
\right|_{z= 0}= 0;
\; \left. \frac{\pp \widetilde{T}}{\pp \vec{n}} \right|_{\Gg_s}= 0
} \right\}.
\end{eqnarray*}
We also denote by $H^{\prime}$  the dual space of $H^1(\Om)$,
with the dual action $\ang{\cdot,  \cdot}$.

Next, we recall the following Poincar\'{e}-type inequalities (cf., e.g., { \cite{AR75}, \cite{Evans} \cite{ZW89}})
\begin{proposition} \label{T-EQU}
The norm defined as in {\em (\ref{NORM})} is equivalent to the
$H^1(\Om)$ norm.
That is, there is a positive constant $K_1$ such that
\begin{equation}
\frac{1}{K_1}
\| T \|^2  \leq  \| T \|^2_{H^1(\Om)}
\leq K_1 \| T \|^2
\label{TTT}
\end{equation}
for every $T \in H^1(\Om).$ Moreover, we have
\begin{eqnarray}
&&|T|^2 \leq \widetilde{K} \|T\|^2,
\qquad \qquad \mbox{for all} \quad T \in H^1(\Om),
\label{PIE}
\end{eqnarray}
where
\begin{eqnarray}
&& \widetilde{K} = \max \left\{ \frac{2H}{\aa},  \frac{2H^2}{K_v}
\right\}.          \label{K-T}
\end{eqnarray}

\end{proposition}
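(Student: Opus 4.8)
The plan is to establish the anisotropic Poincar\'e inequality \eqref{PIE} first, since the norm equivalence \eqref{TTT} follows from it together with the standard trace theorem. Throughout I would argue for $T \in C^\infty(\overline\Om)$ (equivalently $T \in \widetilde{\mathcal V}$) and pass to general $T \in H^1(\Om)$ by density, using that both the surface trace on $\Gg_u$ and all the quadratic forms involved depend continuously on $T$ in the $H^1(\Om)$ topology.

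For \eqref{PIE}, the key idea is to integrate $\pp_z T$ \emph{downward} from the top surface $\{z=0\}$, where the norm \eqref{NORM} carries its boundary contribution. Writing
$$T(x,y,z) = T(x,y,0) - \int_z^0 \pp_\xi T(x,y,\xi)\,d\xi$$
and using $(a-b)^2 \le 2a^2 + 2b^2$ together with the Cauchy--Schwarz inequality and $-z \le H$, I obtain the pointwise bound
$$\abs{T(x,y,z)}^2 \le 2\abs{T(x,y,0)}^2 + 2H\int_{-H}^0 \abs{\pp_\xi T(x,y,\xi)}^2\,d\xi.$$
Integrating in $z$ over $(-H,0)$ and then in $(x,y)$ over $M$ gives
$$\abs{T}^2 \le 2H\int_{\Gg_u}\abs{T(x,y,0)}^2\,dxdy + 2H^2\int_\Om \abs{\pp_z T}^2\,dxdydz.$$
Factoring $\tfrac{2H}{\aa}$ and $\tfrac{2H^2}{K_v}$ out of the two terms and discarding the nonnegative horizontal-gradient contribution $K_h\int_\Om\abs{\nabla T}^2$ present in \eqref{NORM}, the right-hand side is at most $\widetilde K\bigl(\aa\int_{\Gg_u}\abs{T}^2 + K_v\int_\Om\abs{\pp_z T}^2\bigr) \le \widetilde K\,\|T\|^2$, recovering exactly $\widetilde K = \max\{2H/\aa,\,2H^2/K_v\}$ as in \eqref{K-T}.

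For the equivalence \eqref{TTT}, I would obtain the reverse bound $\|T\|^2_{H^1(\Om)} \le K_1\|T\|^2$ by estimating $\int_\Om(\abs{\nabla T}^2 + \abs{\pp_z T}^2) \le \tfrac{1}{\min\{K_h,K_v\}}\|T\|^2$ directly from \eqref{NORM}, and then absorbing the remaining $L^2$ term through \eqref{PIE}, namely $\abs{T}^2 \le \widetilde K\,\|T\|^2$; summing yields an admissible constant $\widetilde K + \tfrac{1}{\min\{K_h,K_v\}}$. For the bound $\|T\|^2 \le K_1\|T\|^2_{H^1(\Om)}$ I would dominate the bulk gradient terms by $\max\{K_h,K_v\}\,\|T\|^2_{H^1(\Om)}$ and control the surface term via the trace inequality $\int_{\Gg_u}\abs{T(x,y,0)}^2\,dxdy \le c_T\,\|T\|^2_{H^1(\Om)}$, valid on the smooth (or box) domain $\Om$. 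Taking $K_1$ to be the larger of the two constants then gives \eqref{TTT}.

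I do not anticipate a serious obstacle, as both estimates are classical Poincar\'e/trace arguments. The only points requiring genuine care are the bookkeeping of constants so that the precise value of $\widetilde K$ in \eqref{K-T} is reproduced --- in particular, organizing the pointwise estimate so that the factors $2H$ and $2H^2$ attach respectively to the surface and vertical-derivative terms --- and the density step, which relies on continuity of the trace map $T \mapsto T|_{\Gg_u}$ from $H^1(\Om)$ so that the inequalities proved for smooth $T$ persist in the limit.
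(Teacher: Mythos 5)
Your proposal is correct. Note that the paper does not actually prove this proposition: it is \emph{recalled} as a standard Poincar\'e/trace-type fact with citations to Adams, Evans and Ziemer, so there is no in-paper argument to compare against. Your argument is the expected one, and it is carried out cleanly: the fundamental-theorem-of-calculus decomposition $T(x,y,z)=T(x,y,0)-\int_z^0\pp_\xi T\,d\xi$, followed by $(a-b)^2\le 2a^2+2b^2$, Cauchy--Schwarz in $\xi$, and integration over $(-H,0)\times M$, yields $|T|^2\le 2H\int_{\Gg_u}|T(\cdot,0)|^2+2H^2\int_\Om|\pp_z T|^2$, and factoring out $2H/\aa$ and $2H^2/K_v$ reproduces exactly $\widetilde K=\max\{2H/\aa,\,2H^2/K_v\}$ from \eqref{K-T}. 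The two halves of the equivalence \eqref{TTT} then follow as you describe, the nontrivial direction being the trace inequality on $\Gg_u$ for the bound $\|T\|^2\le K_1\|T\|^2_{H^1(\Om)}$ and the inequality \eqref{PIE} itself for the reverse. One small remark: the parenthetical ``equivalently $T\in\widetilde{\mathcal V}$'' in your density step is unnecessary and slightly misleading --- the proposition is asserted for all of $H^1(\Om)$, and the correct dense class is $C^\infty(\overline\Om)$ without the boundary conditions defining $\widetilde{\mathcal V}$; since your pointwise computation never uses those boundary conditions, the argument goes through as written once that parenthetical is dropped.
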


For convenience we state the following version of
Sobolev and Ladyzhenskaya interpolation inequalities (cf., e.g., {
\cite{AR75, LADY}}):
\begin{eqnarray}
&&\left\{ \begin{array}{l}
\displaystyle{\| \phi(x,y) \|_{L^4(M)}  \leq C_4  \| \phi(x,y)
\|_{L^2(M)}^{1/2}
\| \phi(x,y) \|_{H^1(M)}^{1/2},}  \\
\displaystyle{\| \phi(x,y) \|_{L^6(M)}  \leq C_4  \| \phi(x,y)
\|_{L^2(M)}^{1/3}
\| \phi(x,y) \|_{H^1(M)}^{2/3},}
\end{array} \right.  \label{SIT-2}
\end{eqnarray}
and
\begin{eqnarray}
&&
\left\{ \begin{array}{l}
\displaystyle{\| g(x,y,z) \|_{L^3(\Om)}  \leq C_5
| g(x,y,z) |^{1/2} \| g(x,y,z) \|_{H^1(\Om)}^{1/2},}  \\
\displaystyle{\| g(x,y,z) \|_{L^6(\Om)}  \leq C_5
\| g(x,y,z) \|_{H^1(\Om)},}
\end{array} \right.   \label{SIT-3}
\end{eqnarray}
for all $ \phi \in H^1(M)$ and $g \in H^1(\Om)$, respectively.
Also, we recall the integral version of Minkowsky inequality for the $L^p$
spaces, $p\geq 1$. Let $\Om_1 \subset \mathbb{R}^{m_1}$ and
 $\Om_2 \subset \mathbb{R}^{m_2}$ be two measurable sets, where
$m_1$ and $m_2$ are two positive integers. Suppose that
$f(\xi,\eta)$ is measurable over $\Om_1 \times \Om_2$. Then,
\begin{equation}
\hskip0.35in \left[ { \int_{\Om_1} \left( \int_{\Om_2} |f(\xi,\eta)| d\eta
\right)^p d\xi } \right]^{1/p}
\leq \int_{\Om_2} \left( \int_{\Om_1} |f(\xi,\eta)|^p d\xi
\right)^{1/p} d\eta.
\label{MKY}
\end{equation}
Hereafter, $C,$ which may depend on the domain $\Om$ and the constant
parameters $ f_0, \beta, \aa, A_h, A_v, K_h, K_v$
in the system (\ref{EQ-1})--(\ref{INIT}),
will denote a constant that may change from
line to line.

We will apply the following inequality which is a particular case of a more general inequality proved in \cite{Jones_Titi}.
\begin{lemma}\label{gen_gron_2}\cite{Jones_Titi} Let $\tau>0$ be fixed. Suppose that $Y(t)$ is an absolutely continuous nonnegative function which is locally integrable and that it satisfies the following:
\begin{align*}
\od{Y}{t} + \alpha(t) Y \leq \beta(t),\qquad \text{ a.e. on } (0,\infty),
\end{align*}
such that
\begin{align}\label{cond_1}
\liminf_{t\rightarrow\infty} \int_t^{t+\tau} \alpha(s)\,ds \geq \gamma, \qquad 
\limsup_{t\rightarrow\infty} \int_t^{t+\tau}  \alpha^{-}(s)\,ds < \infty,
\end{align}
and
\begin{align}
\lim_{t\rightarrow \infty} \int_t^{t+\tau} \beta^{+}(s)\,ds = 0,
\end{align}
for some $\gamma>0$, where $\alpha^{-} = \max\{-\alpha, 0\}$ and $\beta^{+} = \max\{\beta,0\}$.
Then, $Y(t)\rightarrow 0$ at an exponential rate, as $t\rightarrow \infty$.
\end{lemma}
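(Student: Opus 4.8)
The plan is to treat the differential inequality by the classical integrating-factor method and then to extract the decay from the time-averaged hypotheses. First I would multiply the inequality $\od{Y}{t} + \alpha(t) Y \leq \beta(t)$ by the absolutely continuous, strictly positive factor $\exp\!\left(\int_{t_0}^{t}\alpha(s)\,ds\right)$, for a fixed but arbitrary $t_0>0$, recognize the left-hand side as a total derivative, integrate from $t_0$ to $t$, and divide through to obtain
\[
Y(t) \leq Y(t_0)\,e^{-\int_{t_0}^{t}\alpha(s)\,ds} + \int_{t_0}^{t}\beta(s)\,e^{-\int_{s}^{t}\alpha(r)\,dr}\,ds.
\]
Since $\beta(s)\le\beta^{+}(s)$ and the exponential weights are positive, it then suffices to control the homogeneous term and the forcing term $\int_{t_0}^{t}\beta^{+}(s)\,e^{-\int_s^t\alpha(r)\,dr}\,ds$ separately.

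The core of the argument, which I would isolate as a sublemma, is a uniform lower bound of the form $\int_{s}^{t}\alpha(r)\,dr \geq \delta(t-s) - M$ valid for all $t_0 \le s \le t$, with $\delta = \gamma/(2\tau)>0$ and $M\ge 0$ constants and $t_0$ chosen large. To prove it I would use the first condition in \eqref{cond_1} to fix $t_0$ so that $\int_{u}^{u+\tau}\alpha(r)\,dr \geq \gamma/2$ for every $u\ge t_0$, and the second condition to fix a finite $M_0$ with $\int_{u}^{u+\tau}\alpha^{-}(r)\,dr \leq M_0$ for every $u\ge t_0$. Writing $t-s = n\tau + r$ with $n=\lfloor (t-s)/\tau\rfloor$ and $0\le r<\tau$, I would split $[s,t]$ into $n$ consecutive windows of length $\tau$ plus a terminal fragment, bound the integral over each full window below by $\gamma/2$, and bound the fragment below by $-\int_{s+n\tau}^{s+n\tau+\tau}\alpha^{-}\ge -M_0$. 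Summing gives $\int_s^t\alpha \geq n\gamma/2 - M_0 \geq \delta(t-s) - M$ with $M = \gamma/2 + M_0$, hence $e^{-\int_s^t\alpha} \leq e^{M}e^{-\delta(t-s)}$. The homogeneous term is then bounded by $Y(t_0)e^{M}e^{-\delta(t-t_0)}$, which decays exponentially.

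For the forcing term I would feed the bound $e^{-\int_s^t\alpha}\le e^M e^{-\delta(t-s)}$ into the integral and exploit $\lim_{t\to\infty}\int_t^{t+\tau}\beta^{+}=0$. Given $\varepsilon>0$, choose $T_1\ge t_0$ with $\int_{u}^{u+\tau}\beta^{+}<\varepsilon$ for $u\ge T_1$. Splitting $\int_{t_0}^{t} = \int_{t_0}^{T_1} + \int_{T_1}^{t}$, the first piece is at most $e^{M}e^{-\delta(t-T_1)}\int_{t_0}^{T_1}\beta^{+}$, which vanishes exponentially as $t\to\infty$; for the second piece I would again partition $[T_1,t]$ into windows of length $\tau$ and sum the resulting geometric series, using that the exponential weight is comparable to $e^{-\delta\tau k}$ on the $k$-th window counted from the right, to get a bound of order $\varepsilon/(1-e^{-\delta\tau})$. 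Letting $t\to\infty$ and then $\varepsilon\to 0$ shows the forcing term tends to $0$. When $\beta^{+}$ is eventually zero — the error-free case relevant to the algorithm \eqref{DA_PG} — the same geometric estimate yields genuine exponential decay, and combining it with the homogeneous term gives $Y(t)\to 0$ at an exponential rate.

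The main obstacle is the uniform lower bound for $\int_s^t\alpha$: the hypotheses control $\alpha$ only in an averaged sense over windows of a fixed length $\tau$, so the delicate point is to handle the leftover fractional window $[s+n\tau,t]$ and any local sign changes of $\alpha$ without losing uniformity in $s$. This is precisely where the finiteness of $\limsup_{t\to\infty}\int_t^{t+\tau}\alpha^{-}$ in \eqref{cond_1} is essential: it prevents the integrating factor from growing over short intervals and absorbs the terminal fragment into the constant $M$. The remaining book-keeping, namely the geometric summation controlling the forcing, is routine once this estimate is in hand.
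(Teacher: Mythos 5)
The paper does not prove this lemma; it is quoted verbatim from \cite{Jones_Titi}, so there is no in-text argument to compare against. Your proof is correct and is essentially the standard argument behind that reference: the integrating-factor representation, the uniform windowed lower bound $\int_s^t\alpha(r)\,dr\ge\delta(t-s)-M$ obtained by tiling $[s,t]$ with intervals of length $\tau$ and absorbing the terminal fragment via the bound on $\int_u^{u+\tau}\alpha^-$, and the geometric summation controlling the forcing term. You are also right to observe that, as stated, the hypothesis $\lim_{t\to\infty}\int_t^{t+\tau}\beta^+(s)\,ds=0$ only yields $Y(t)\to 0$ in general, with genuine exponential decay when $\beta^+$ eventually vanishes; in the application here $\beta\equiv 0$, so the exponential conclusion is exactly the case your estimate delivers.
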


Finally, we state the following proposition that was proved in { \cite{PG_Cao_Titi}}.
\begin{proposition}\cite{PG_Cao_Titi}\label{PROP}
Let $u=(u_1, u_2)\in H^2(\Omega)$, $f\in L^2(\Omega)$ and $g\in H^1(\Omega)$. Then
\begin{eqnarray*}
&& \left|{  \int_{\Om}
     \left(  \nabla \cdot \int_{-H}^z  u(x,y,\xi,t) \,d\xi
     \right)  f(x,y,z)\; g(x,y,z) \; dxdydz
  }\right|  \\
&&\hskip.06in
\leq C \left| f \right| \|u\|_{H^1(\Om)}^{1/2} \|u\|_{H^2(\Om)}^{1/2}
\left\| g  \right\|_{H^1(\Om)}^{1/2}
\left| g \right|^{1/2}.
\end{eqnarray*}
\end{proposition}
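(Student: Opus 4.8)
The plan is to exploit the structure of the dangerous factor $\nabla\cdot\int_{-H}^z u\,d\xi$: although it formally costs a horizontal derivative of $u$, after bounding it by its full vertical average it becomes independent of $z$, and the whole estimate collapses to a two–dimensional Ladyzhenskaya argument in the horizontal variables, carried out slice by slice in $z$. First I would write $\nabla\cdot\int_{-H}^z u(x,y,\xi)\,d\xi=\int_{-H}^z \nabla\cdot u(x,y,\xi)\,d\xi$ and use the pointwise bound
\[
\left|\int_{-H}^z \nabla\cdot u(x,y,\xi)\,d\xi\right|\leq \int_{-H}^0 \left|\nabla\cdot u(x,y,\xi)\right|\,d\xi =: V(x,y),
\]
which does not depend on $z$. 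Substituting this into the integral and applying the Cauchy--Schwarz inequality in the vertical variable yields
\[
\left|\int_{\Om}\left(\nabla\cdot\int_{-H}^z u\,d\xi\right)fg\,dxdydz\right|\leq \int_M V(x,y)\left(\int_{-H}^0|f|^2\,dz\right)^{1/2}\left(\int_{-H}^0|g|^2\,dz\right)^{1/2}dxdy .
\]

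Next I would apply the generalized H\"older inequality on $M$ with exponents $(4,2,4)$ to the three horizontal factors $V$, $\|f(x,y,\cdot)\|_{L^2((-H,0))}$ and $\|g(x,y,\cdot)\|_{L^2((-H,0))}$. The middle factor is precisely $|f|$, the $L^2(\Om)$ norm, so it requires no further work. For the factor coming from $g$ I would set $\phi(x,y)=\|g(x,y,\cdot)\|_{L^2((-H,0))}$ and invoke the two–dimensional Ladyzhenskaya inequality \eqref{SIT-2}; since $\|\phi\|_{L^2(M)}=|g|$ and, by Cauchy--Schwarz in $z$, $|\nabla_{x,y}\phi|\leq \|\nabla_{x,y}g(x,y,\cdot)\|_{L^2((-H,0))}$ pointwise, one gets $\|\phi\|_{H^1(M)}\leq \|g\|_{H^1(\Om)}$ and hence $\|\phi\|_{L^4(M)}\leq C\,|g|^{1/2}\|g\|_{H^1(\Om)}^{1/2}$, which is exactly the $g$ part of the claimed right-hand side.

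The main obstacle, and the step demanding the most care, is the factor $\|V\|_{L^4(M)}$, since $V$ carries the derivative loss hidden in $\nabla\cdot u$. Here I would first use the integral Minkowski inequality \eqref{MKY} to move the $L^4(M)$ norm inside the vertical integral, $\|V\|_{L^4(M)}\leq \int_{-H}^0 \|\nabla\cdot u(\cdot,\cdot,\xi)\|_{L^4(M)}\,d\xi$, then apply \eqref{SIT-2} on each horizontal slice and Cauchy--Schwarz in $\xi$ to obtain
\[
\|V\|_{L^4(M)}\leq C\left(\int_{-H}^0 \|\nabla\cdot u(\cdot,\cdot,\xi)\|_{L^2(M)}^2\,d\xi\right)^{1/4}\left(\int_{-H}^0 \|\nabla\cdot u(\cdot,\cdot,\xi)\|_{H^1(M)}^2\,d\xi\right)^{1/4}.
\]
The first vertical integral equals $\|\nabla\cdot u\|_{L^2(\Om)}^2\leq \|u\|_{H^1(\Om)}^2$, while the second is controlled by $\|u\|_{H^2(\Om)}^2$, because one horizontal derivative of $\nabla\cdot u$ amounts to two derivatives of $u$; thus $\|V\|_{L^4(M)}\leq C\,\|u\|_{H^1(\Om)}^{1/2}\|u\|_{H^2(\Om)}^{1/2}$.

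Collecting the three H\"older factors and absorbing $H$ together with the physical constants into $C$ produces exactly the asserted bound $C\,|f|\,\|u\|_{H^1(\Om)}^{1/2}\|u\|_{H^2(\Om)}^{1/2}\|g\|_{H^1(\Om)}^{1/2}|g|^{1/2}$. The only technical points to verify along the way are the differentiation of the vertical $L^2$ norm in the estimate for $\|\phi\|_{H^1(M)}$ (a standard regularization/density argument handles the possible vanishing of $\phi$) and the bookkeeping of the vertical integrations, both of which are routine once the slice-by-slice Ladyzhenskaya strategy is in place.
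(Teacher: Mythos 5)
Your argument is correct, and it is essentially the standard proof: the paper itself states Proposition \ref{PROP} without proof, citing \cite{PG_Cao_Titi}, and the argument there is exactly this anisotropic scheme --- bound the $z$-dependent factor by $\int_{-H}^0|\nabla u|\,d\xi$, apply Cauchy--Schwarz in $z$ and H\"older with exponents $(4,2,4)$ on $M$, then control the two $L^4(M)$ factors by the Minkowski integral inequality \eqref{MKY} and the two-dimensional Ladyzhenskaya inequality \eqref{SIT-2}. The only cosmetic difference is that the cited reference estimates $\bigl\|(\int_{-H}^0|g|^2\,dz)^{1/2}\bigr\|_{L^4(M)}^2=\bigl\|\int_{-H}^0|g|^2\,dz\bigr\|_{L^2(M)}$ directly via Minkowski and slicewise Ladyzhenskaya, which avoids the (harmless, but as you note technically fussier) step of differentiating $\phi(x,y)=\|g(x,y,\cdot)\|_{L^2((-H,0))}$.
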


\subsection{Regularity Results}

We state the definition of weak solutions.

\begin{definition} \cite{PG_Cao_Titi}\label{D-1}
\thinspace Let $\widetilde{T}_0 \in L^2 (\Om)$ and let
$S$ be any fixed positive time. \thinspace
The vector field $v(x,y,z,t)$, and the scalar functions
$p_s(x,y,t)$  and $\widetilde{T}(x,y,z,t)$ are called
a weak solution of {\em (\ref{TEQ-1})--(\ref{TEQ-5})} on
the time interval $[0,S]$ if
\begin{eqnarray*}
&&p_s (x,y,t)  \in C([0,S], L^2(M)) \cap L^2([0,S], H^1 (M)), \\
&&u(x,y,z,t)  \in C([0,S],H^1(\Om)) \cap L^2([0,S], H^2 (\Om)), \\
&&\widetilde{T} (x,y,z,t)  \in C([0,S],L^2 (\Om) ) \cap L^2([0,S],
H^1(\Om)),     \\
&&\pp_t \widetilde{T} (x,y,z,t) \in
L^1([0,S], H^{\prime}),
\end{eqnarray*}
(recall that $H^{\prime}$ is the dual space of $H^1(\Om)$),
and if they satisfy
\begin{align*}
&
\int_{\Om} \nabla  \left[ p_s(x,y,t)
- \int_{-H}^z (\widetilde{T} (x,y,\xi,t)+T^*) d\xi
\right] \phi \, dxdydz+ \\
& + \int_{\Om} \left( f \vec{k} \times u\right) \phi  \, dxdydz
 +  \int_{\Om} \left(
A_h \nabla u \cdot \nabla \phi + A_v \pp_z u \pp_z \phi \right)  \,
dxdydz  \\
& = \int_{\Gg_u} A_v \tau \phi  \, dxdydz,   \nonumber
\end{align*}
and
\begin{align*}
&
\int_{\Om} \widetilde{T}(t) \psi \, dxdydz  +\int_{t_0}^t \int_{\Om}
{\left(K_h \nabla \widetilde{T} \cdot \nabla
  \psi + K_v \pp_z \widetilde{T} \pp_z \psi\right)}  \, dxdydz
  \;\;  \\
&+\aa \int_{t_0}^t \int_{\Gg_u} \widetilde{T} \psi \, dx dy
+\int_{t_0}^t \int_{\Om} \left( u  \cdot \nabla T^* \right) \psi \,dxdydz
+   \nonumber   \\
&+\int_{t_0}^t \int_{\Om}  \left[ { \left( u
 \cdot \nabla \widetilde{T} \right) } \; \psi
-\left( \nabla \cdot \int_{-H}^z  u(x,y,\xi,t) \,d\xi
\right)  \pp_z \widetilde{T} \; \psi  \right]  \,
dxdydz  \\
& =\int_{\Om} \widetilde{T}(t_0)\psi  \, dxdydz+ \int_{t_0}^t \int_{\Om} Q^* \psi \, dxdydz,
\end{align*}
for every $\phi \in (C^{\infty}(\overline{\Om}))^2$
and $\psi \in C^{\infty}(\overline{\Om}),$
and for almost every $t$, $t_0\in [0,S]$.

Moreover, if $\widetilde{T}_0 \in H^1(\Om)$ a weak solution is called strong solution of
{\em (\ref{TEQ-1})--(\ref{TEQ-5})} on $[0,S]$ if, in addition, it
satisfies
\begin{eqnarray*}
&&p_s(x,y,t)  \in C([0,S],H^1(M)) \cap L^2([0,S], H^2 (M)),  \\
&&u(x,y,z,t)  \in C([0,S],H^1(\Om)) \cap L^2([0,S], H^2 (\Om)),  \\
&&\widetilde{T} (x,y,z,t)  \in C([0,S],H^1(\Om)) \cap L^2([0,S], H^2
(\Om)).
\end{eqnarray*}
\end{definition}

Now we recall the global existence and uniqueness results proved in {{\cite{PG_Cao_Titi}}}.

\begin{theorem}[Weak solutions]\cite{PG_Cao_Titi} \label{T-WEAK}
Suppose that $\tau \in H_0^1(M), T^* \in H^2(M)$ and $Q \in L^2(\Om).$
Then for every $\widetilde{T}_0 = T_0 -T^* \in L^2(\Om)$ and
$S>0,$ there is a unique weak solution  $(p_s,  v, \widetilde{T})$
($p_s$ is unique up to a constant) of
the system {\em (\ref{TEQ-1})--(\ref{TEQ-5})} on the interval $[0,S]$.

Furthermore, the weak solution of the system (\ref{TEQ-1})--(\ref{TEQ-5})
depends continuously on the initial data. That is, the problem is globally
well--posed.
\end{theorem}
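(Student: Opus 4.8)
The plan is to exploit the \emph{diagnostic} structure of \eqref{PG}: at each instant the velocity $u$ and bottom pressure $p_s$ are determined linearly from the temperature $\widetilde{T}$ through the Stokes-type problem \eqref{TEQ-1}--\eqref{TEQ-11} subject to \eqref{TEQ-3}, so the genuine evolution reduces to the single, nonlocally nonlinear, parabolic equation \eqref{TEQ-2} for $\widetilde{T}$. First I would solve this diagnostic problem: for fixed $\widetilde{T}\in H^1(\Om)$ and fixed data $\tau, T^*$, show that \eqref{TEQ-1}--\eqref{TEQ-11} with boundary conditions \eqref{TEQ-3} has a unique solution $(u,p_s)$, with $p_s$ fixed up to a constant by the constraint $\nabla\cdot\int_{-H}^0 u\,dz=0$ (which plays the role of incompressibility), the skew-symmetric Coriolis term $f\vec{k}\times u$ being a lower-order perturbation of $L_1$. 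Testing the elliptic equation against $u$ gives $\|u\|_{H^1(\Om)}\le C(|\widetilde{T}|+\text{data})$, while elliptic regularity upgrades this to $\|u\|_{H^2(\Om)}\le C(\|\widetilde{T}\|_{H^1(\Om)}+\text{data})$; this two-tier bound is precisely the combination consumed by Proposition \ref{PROP}.

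With the solution map $\widetilde{T}\mapsto u$ in hand, I would build solutions by a Galerkin scheme: project \eqref{TEQ-2} onto the span of the first $N$ eigenfunctions of $L_2$ under \eqref{TEQ-4}, reconstruct the approximate velocity from the approximate temperature, and solve the resulting finite-dimensional ODE locally in time. The basic $L^2$ estimate, from testing \eqref{TEQ-2} against $\widetilde{T}$, is clean: the full three-dimensional advection $u\cdot\nabla\widetilde{T}-\left(\nabla\cdot\int_{-H}^z u\,d\xi\right)\pp_z\widetilde{T}$ is skew-symmetric after integrating by parts in $z$ and using that the reconstructed vertical velocity vanishes at $z=0,-H$ (cf.\ \eqref{DIV-1}--\eqref{DIV}) together with $u\cdot\vec{n}=0$ on $\Gg_s$, so the transport terms drop out entirely. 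Coercivity comes from the equivalent norm $\|\cdot\|$ of Proposition \ref{T-EQU}, which already absorbs the Robin boundary contribution on $\Gg_u$, and the remaining forcing terms $u\cdot\nabla T^*$ and $Q^*$ are controlled via \eqref{SIT-2}--\eqref{SIT-3} and the elliptic bound on $u$, giving $\frac{d}{dt}|\widetilde{T}|^2+c\|\widetilde{T}\|^2\le C$ and hence a uniform $L^\infty_t L^2\cap L^2_t H^1$ bound.

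The main obstacle is the higher-order estimate needed both for compactness in the limit and for uniqueness. Testing \eqref{TEQ-2} against $L_2\widetilde{T}$, the vertical-advection term $\left(\nabla\cdot\int_{-H}^z u\,d\xi\right)\pp_z\widetilde{T}$ no longer cancels, and since the vertical velocity costs a full derivative of $u$ this is the genuinely dangerous nonlinearity. This is exactly where Proposition \ref{PROP} is used: it bounds the offending integral by $C|f|\,\|u\|_{H^1(\Om)}^{1/2}\|u\|_{H^2(\Om)}^{1/2}\|g\|_{H^1(\Om)}^{1/2}|g|^{1/2}$, and inserting the two-tier elliptic bound $\|u\|_{H^1(\Om)}^{1/2}\|u\|_{H^2(\Om)}^{1/2}\le C(|\widetilde{T}|\,\|\widetilde{T}\|_{H^1(\Om)})^{1/2}$ followed by Young's inequality lets one absorb the top-order part into the dissipation and close a Gronwall-type inequality for $\|\widetilde{T}\|^2$. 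Passing $N\to\infty$ with the Aubin--Lions compactness lemma then yields a global weak solution.

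Finally, for uniqueness and continuous dependence I would take two solutions, subtract, and test the difference $\delta\widetilde{T}$ against itself; the dangerous cross terms $\delta u\cdot\nabla\widetilde{T}_2$ and its vertical analogue, where the velocity difference $\delta u$ is reconstructed from $\delta\widetilde{T}$ via the (linear) diagnostic solve, are again dominated using Proposition \ref{PROP} and the established regularity of one of the solutions. Gronwall's inequality on $[0,S]$ then delivers both uniqueness and Lipschitz dependence of $\widetilde{T}$ on the initial datum $\widetilde{T}_0$, completing the global well-posedness. The crux of the whole argument is the coupling of the elliptic $H^2$-regularity of the diagnostic velocity to the anisotropic estimate of Proposition \ref{PROP}, which tames the vertical-velocity nonlinearity.
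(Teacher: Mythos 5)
A preliminary point: the paper does not prove Theorem \ref{T-WEAK} at all --- it is recalled verbatim from \cite{PG_Cao_Titi} --- so there is no in-text proof to compare against. That said, your outline is essentially the argument of \cite{PG_Cao_Titi}, and it matches the machinery this paper re-deploys in Section \ref{convergence} for the difference system: the vertical averaging that eliminates the barotropic mode and recovers the surface pressure from the temperature, the two-tier elliptic bounds of the type \eqref{UR}, the exact cancellation of the advective terms as in \eqref{W-2}, and Proposition \ref{PROP} to tame the vertical-velocity nonlinearity. Your identification of the crux --- that the diagnostic velocity carries two more derivatives than the temperature, which is what makes uniqueness of \emph{weak} solutions attainable here unlike for 3D Navier--Stokes --- is exactly right. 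The one inaccuracy is your claim that the $H^1$ estimate obtained by testing \eqref{TEQ-2} against $L_2\widetilde{T}$ is needed ``for compactness in the limit and for uniqueness.'' For weak solutions the datum is only in $L^2(\Om)$, so that estimate cannot be run from $t=0$; compactness follows from the $L^\infty_t L^2\cap L^2_t H^1$ bound together with the bound on $\pp_t\widetilde{T}$ in the dual space $H^{\prime}$ (as encoded in Definition \ref{D-1}), and uniqueness closes entirely at the $L^2$ level of the difference --- which is in fact what your own final paragraph does, controlling the cross terms by Proposition \ref{PROP} and \eqref{UR} and feeding the time-integrable quantities $\|\widetilde{T}\|^2$ and $\|\widetilde{T}\|^{4/3}$ into Gronwall. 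The $L_2\widetilde{T}$ test belongs to the strong-solution theory of Theorem \ref{T-MAIN}. With that role reassigned, the proposal is sound.
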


\begin{theorem}[Strong solutions]\cite{PG_Cao_Titi}\label{T-MAIN}
Suppose that $\tau \in H_0^1(M), Q \in H^1(\Om)$ and $T^* \in H^2(M).$
Then
for every $\widetilde{T}_0 = T_0 -T^* \in H^1(\Om),$ and
$S>0,$ there is a unique
strong solution $\widetilde{T}$ of system
{\em (\ref{TEQ-1})--(\ref{TEQ-5})}.
\end{theorem}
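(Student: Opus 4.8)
The plan is to upgrade the weak solution already furnished by Theorem~\ref{T-WEAK} to a strong solution by proving one additional space-regularity estimate ($H^1$ in the spatial variables); since weak solutions are known to be unique, uniqueness of the strong solution is then inherited for free. The structural feature I would exploit is that system~\eqref{PG} is diagnostic--prognostic: for a given $\widetilde{T}$ (and data $T^*,\tau$), equations~\eqref{TEQ-1}, \eqref{TEQ-11} together with the boundary conditions~\eqref{TEQ-3} form a linear Stokes--Coriolis elliptic problem for $(u,p_s)$. The first step is therefore to record the two elliptic bounds $\|u\|_{H^1(\Om)}\leq C(|\widetilde{T}|+\mathrm{data})$ and $\|u\|_{H^2(\Om)}\leq C(\|\widetilde{T}\|_{H^1(\Om)}+\mathrm{data})$, the gain of regularity coming from the vertical integration in $\nabla\int_{-H}^z\widetilde{T}\,d\xi$ and the elliptic solve. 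Substituting $u=u(\widetilde{T})$ into~\eqref{TEQ-2} yields a closed quasilinear parabolic equation for $\widetilde{T}$, and all estimates below are understood as carried out on a Galerkin truncation in the eigenfunctions of $L_2$ subject to~\eqref{TEQ-4}, and then passed to the limit.

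First I would reprove the basic energy estimate by testing~\eqref{TEQ-2} with $\widetilde{T}$. The dissipation produces $\|\widetilde{T}\|^2$ in the norm~\eqref{NORM}, with the Robin condition at $z=0$ generating the boundary contribution. The crucial point is that, writing $w=-\nabla\cdot\int_{-H}^z u\,d\xi$, the two advective terms in~\eqref{TEQ-2} combine into transport by the genuinely three-dimensional divergence-free field $(u,w)$, which satisfies $w=0$ on $\Gg_u\cup\Gg_b$ and $u\cdot\vec{n}=0$ on $\Gg_s$; hence $\int_\Om(u\cdot\nabla\widetilde{T}+w\,\pp_z\widetilde{T})\,\widetilde{T}\,dxdydz=0$ and the nonlinearity drops out at the $L^2$ level. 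The linear terms $u\cdot\nabla T^*$ and $Q^*$ are controlled by Cauchy--Schwarz, the first elliptic bound, and Young's inequality, yielding $\widetilde{T}\in L^\infty([0,S],L^2(\Om))\cap L^2([0,S],H^1(\Om))$.

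The heart of the matter is the $H^1$ estimate, obtained by testing~\eqref{TEQ-2} with $L_2\widetilde{T}$, which produces $\tfrac12\tfrac{d}{dt}\|\widetilde{T}\|^2+|L_2\widetilde{T}|^2$ against the nonlinear contributions $\int_\Om(u\cdot\nabla\widetilde{T})\,L_2\widetilde{T}$ and $\int_\Om(\nabla\cdot\int_{-H}^z u\,d\xi)\,\pp_z\widetilde{T}\,L_2\widetilde{T}$. The horizontal term is handled by the Ladyzhenskaya/Sobolev inequalities~\eqref{SIT-2}--\eqref{SIT-3} and Young's inequality, absorbing the top-order factor into $|L_2\widetilde{T}|^2$. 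The genuinely anisotropic vertical term is exactly the configuration controlled by Proposition~\ref{PROP}: with $f=\pp_z\widetilde{T}$ and $g$ a component of $L_2\widetilde{T}$, it is bounded by $C\,|\pp_z\widetilde{T}|\,\|u\|_{H^1(\Om)}^{1/2}\|u\|_{H^2(\Om)}^{1/2}\|g\|_{H^1(\Om)}^{1/2}|g|^{1/2}$; inserting the second elliptic bound $\|u\|_{H^2(\Om)}\leq C(\|\widetilde{T}\|_{H^1(\Om)}+\mathrm{data})$ and using Young's inequality lets me absorb the highest-order piece into the dissipation, at the cost of a factor polynomial in $\|\widetilde{T}\|$ (using the norm equivalence of Proposition~\ref{T-EQU}). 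I expect this to be the main obstacle, since closing the estimate demands that the anisotropic cubic term consume no more than the available half-derivative of dissipation; this is precisely what Proposition~\ref{PROP} is designed to guarantee.

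What remains is routine. The resulting inequality has the form $\tfrac{d}{dt}\|\widetilde{T}\|^2+c\,|L_2\widetilde{T}|^2\leq C(1+\|\widetilde{T}\|^2)\|\widetilde{T}\|^2+\mathrm{data}$, and since $\int_0^S\|\widetilde{T}\|^2\,dt$ is finite by the energy estimate, a standard Gronwall argument on $[0,S]$ gives $\widetilde{T}\in L^\infty([0,S],H^1(\Om))\cap L^2([0,S],H^2(\Om))$, whence $u\in C([0,S],H^1(\Om))\cap L^2([0,S],H^2(\Om))$ and $p_s$ in the stated spaces via the elliptic regularity. These bounds, being uniform in the Galerkin index, yield the compactness needed (Aubin--Lions, together with a bound on $\pp_t\widetilde{T}$) to pass to the limit and to obtain continuity in time, while uniqueness is inherited from the weak theory of Theorem~\ref{T-WEAK}.
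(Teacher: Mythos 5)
First, a point of order: the paper does not prove Theorem~\ref{T-MAIN} at all --- it is recalled verbatim from \cite{PG_Cao_Titi}, so there is no internal proof to compare your argument against. That said, your outline follows the standard strategy of that reference, and it is consistent with the machinery the present paper actually uses elsewhere: the elliptic bounds \eqref{UR} for the linear Stokes--Coriolis problem and the anisotropic estimate of Proposition~\ref{PROP} appear in the proof of Theorem~\ref{th_conv_1} in essentially the roles you assign them. The diagnostic--prognostic reduction, the $L^2$ energy identity with cancellation of the full three-dimensional transport term, and the Galerkin/Aubin--Lions closing are all sound.

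There is, however, one concrete gap at what you yourself identify as the heart of the matter. You propose to control $\int_\Om\bigl(\nabla\cdot\int_{-H}^z u\,d\xi\bigr)\,\pp_z\widetilde{T}\,L_2\widetilde{T}$ via Proposition~\ref{PROP} with $f=\pp_z\widetilde{T}$ and $g$ a component of $L_2\widetilde{T}$. The proposition requires $g\in H^1(\Om)$ and its right-hand side carries the factor $\|g\|_{H^1(\Om)}^{1/2}$; with $g=L_2\widetilde{T}$ this is $\|L_2\widetilde{T}\|_{H^1(\Om)}^{1/2}$, i.e.\ three derivatives of $\widetilde{T}$, which is strictly higher order than the available dissipation $|L_2\widetilde{T}|^2$ --- Young's inequality cannot absorb a term whose leading norm exceeds the coercive one, so the estimate as written does not close. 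The fix is to swap the roles: take $f=L_2\widetilde{T}\in L^2(\Om)$ and $g=\pp_z\widetilde{T}\in H^1(\Om)$, so the bound reads $C\,|L_2\widetilde{T}|\,\|u\|_{H^1(\Om)}^{1/2}\|u\|_{H^2(\Om)}^{1/2}\|\pp_z\widetilde{T}\|_{H^1(\Om)}^{1/2}|\pp_z\widetilde{T}|^{1/2}$. Inserting $\|u\|_{H^1(\Om)}\leq C|\widetilde{T}|$ and $\|u\|_{H^2(\Om)}\leq C\|\widetilde{T}\|$ (plus data) together with the elliptic bound $\|\widetilde{T}\|_{H^2(\Om)}\leq C\left(|L_2\widetilde{T}|+|\widetilde{T}|\right)$, this becomes $C|L_2\widetilde{T}|^{3/2}|\widetilde{T}|^{1/2}\|\widetilde{T}\|$ plus lower-order terms, and Young's inequality with exponents $4/3$ and $4$ yields a Gronwall coefficient of the form $C|\widetilde{T}|^2\|\widetilde{T}\|^2$, which is integrable on $[0,S]$ by the energy estimate. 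With that correction your sketch closes.
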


\begin{theorem} [Global attractor] \cite{PG_Cao_Titi} \label{global_attractor}
Suppose that $\tau \in H_0^1(M), Q \in L^2 (\Om)$ and $T^* \in H^2(M).$
Then, there is a finite-dimensional  global compact attractor $\mathcal{A} \subset L^2(\Om)$
for the system {\em (\ref{TEQ-1})--(\ref{TEQ-5})}.  Moreover, when $t$ is large enough we have
\begin{subequations}
\begin{align}
&|\widetilde{T} (t)|^2  \leq  R_a(T^*, Q)
:=   2\widetilde{R}_a(T^*, Q) +2 \| T^*\|_{L^2(M)}^2, \label{R-A}  \\
& \int_{t}^{t+r} \|T(s) \|^2 \; ds  \leq
 K_r (r, Q, T^*), \\
& \| \widetilde{T}(t) \| \leq R_v (r,T^*,Q,\tau),
\end{align}
\end{subequations}
where
\begin{align}
 &\widetilde{R}_a(T^*, Q) : = 4 \aa \widetilde{K} \|T^*\|_{L^2 (M)}^2 +  8 \widetilde{K}^2 |Q|^2, \label{R_tilde}\\
 & K_r (r, Q, T^*) := 2 R_a(T^*, Q)+ \left[ 4 \aa \widetilde{K} \|T^*\|_{L^2 (M)}^2 + 8 \widetilde{K}^2 |Q|^2
\right] \; r,  \\
&R_v (r,T^*,Q,\tau) := C \left[ \frac{R_a(T^*, Q) }{r^{1/2}} + \| T^*\|_{H^1 (M)}+
|Q|  \right. \\
&
 \qquad\qquad  \left. + \frac{C}{\ll_1^{1/2}}
{\left( 1+ \|T^*\|_{H^2(\Om)}^2 +
|Q| + \| \tau \|_{H^1(M)}^2 + R^2_a(T^*, Q)
\right) } \right] \times  \nonumber   \\
&
\qquad\qquad  \times
e^{\displaystyle{ C \left[ (R_a(T^*, Q))^4
+ { \left(  \|T^*\|_{H^2(M)}^4 + \| \tau \|_{H^1(M)}^4
+ (R_a(T^*, Q))^4 \right)} \, r
\right]  }}.   \nonumber
\end{align}
\end{theorem}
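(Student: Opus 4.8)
The plan is to establish, in order, (i) a uniform $L^2$ absorbing ball for $\widetilde T$ giving \eqref{R-A}, (ii) a time-averaged $H^1$ bound giving the estimate for $\int_t^{t+r}\|T\|^2\,ds$, (iii) a uniform $H^1$ absorbing ball giving $R_v$, and finally (iv) the existence and finite dimensionality of the attractor by the standard semigroup machinery. Throughout I would exploit the fact that the velocity $u$ is \emph{diagnostic}: for a given $\widetilde T$, equations \eqref{TEQ-1}--\eqref{TEQ-11} constitute a linear elliptic (Stokes-type) boundary value problem whose forcing is $\nabla\int_{-H}^z\widetilde T\,d\xi$ together with the data $\tau$ and $T^*$. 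Elliptic regularity then supplies the crucial bounds $\|u\|_{H^1(\Om)}\le C(\|\widetilde T\|+\text{data})$ and $\|u\|_{H^2(\Om)}\le C(\|\widetilde T\|_{H^1}+\text{data})$, and every estimate on $u$ below is to be read through these inequalities. Well-posedness and continuity of the solution semigroup are already guaranteed by Theorems \ref{T-WEAK}--\ref{T-MAIN}.

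For step (i) I would test \eqref{TEQ-2} with $\widetilde T$ in $L^2(\Om)$. Because the three-dimensional advecting field $(u,w)$, with $w=-\int_{-H}^z\nabla\cdot u\,d\xi$, is divergence free and respects the boundary conditions, the transport terms $u\cdot\nabla\widetilde T-(\nabla\cdot\int_{-H}^z u)\,\pp_z\widetilde T$ are orthogonal to $\widetilde T$ and drop out, leaving
\begin{equation*}
\tfrac12\od{}{t}|\widetilde T|^2+\|\widetilde T\|^2=-\int_\Om (u\cdot\nabla T^*)\,\widetilde T\,dxdydz+\int_\Om Q^*\,\widetilde T\,dxdydz .
\end{equation*}
Young's inequality, the diagnostic bound for $u$, and the Poincar\'e inequality \eqref{PIE} then convert the right-hand side into a differential inequality $\od{}{t}|\widetilde T|^2+\delta|\widetilde T|^2\le C(T^*,Q)$, and Gronwall yields the absorbing radius $R_a$ of \eqref{R-A}. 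Integrating the same differential inequality over $[t,t+r]$ and inserting the absorbing bound produces step (ii), with constant $K_r$.

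The main work, and the main obstacle, is step (iii), the uniform $H^1$ bound. Here I would test \eqref{TEQ-2} against $L_2\widetilde T$ (equivalently against the operator generating the energy norm $\|\cdot\|$). The transport terms no longer cancel, and the genuinely dangerous contribution is $\int_\Om(\nabla\cdot\int_{-H}^z u)\,\pp_z\widetilde T\;L_2\widetilde T$, whose control is exactly what Proposition \ref{PROP} provides, in tandem with the anisotropic Sobolev--Ladyzhenskaya inequalities \eqref{SIT-2}--\eqref{SIT-3}. The factor $\|u\|_{H^2}^{1/2}$ appearing in Proposition \ref{PROP} is then absorbed through the diagnostic $H^2$ bound for $u$ and Young's inequality, at the price of the higher powers of the data that surface inside $R_v$. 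The outcome is an inequality of the form $\od{}{t}\|\widetilde T\|^2+\delta\|\widetilde T\|_{H^2}^2\le a(t)\|\widetilde T\|^2+b(t)$, in which $\int_t^{t+r}a$ and $\int_t^{t+r}b$ are both controlled by step (ii). Applying the uniform Gronwall inequality now delivers the bound $R_v$, the exponential factor being precisely the $\exp(\int a)$ of that lemma.

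Finally, for step (iv): the uniform $H^1$ absorbing ball, together with the compact embedding $H^1(\Om)\hookrightarrow L^2(\Om)$, shows that the semigroup is asymptotically compact and admits a bounded absorbing set, hence a global attractor $\mathcal A\subset L^2(\Om)$. Finite dimensionality would then follow from the standard trace/Lyapunov-exponent argument: linearize the flow along trajectories in $\mathcal A$, estimate the trace of the linearized operator restricted to $N$-dimensional subspaces by a Lieb--Thirring inequality, and verify that the sum of the first $N$ global Lyapunov exponents turns negative once $N$ exceeds a threshold determined by $R_a$, $R_v$ and the physical data, thereby bounding the fractal dimension of $\mathcal A$. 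I expect the $H^1$ step to be by far the most delicate, since the anisotropic advection by the vertically integrated velocity is what separates this system from a standard reaction--diffusion equation.
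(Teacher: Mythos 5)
First, a point of reference: the paper does not prove this theorem at all --- it is quoted verbatim from \cite{PG_Cao_Titi}, so there is no in-paper proof to compare against. Your outline follows the same overall strategy as that reference ($L^2$ absorbing ball, time-averaged $H^1$ bound, uniform-Gronwall $H^1$ ball, then asymptotic compactness and a trace-formula dimension estimate), and steps (ii)--(iv) are sketched soundly, including the correct identification of Proposition \ref{PROP} and the diagnostic elliptic bounds on $u$ as the key tools for the $H^1$ step.

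There is, however, a concrete gap in step (i). After the transport terms cancel, the right-hand side of your energy identity for $\widetilde T$ still contains $-\int_{\Om}(u\cdot\nabla T^*)\,\widetilde T$, and this term is \emph{not} an affine forcing: since $u$ is slaved linearly to $\widetilde T$ through the elliptic problem, $\|u\|_{H^1(\Om)}\leq C(|\widetilde T|+\|T^*\|_{L^2(M)}+\|\tau\|)$, so this term is of size $C\|T^*\|_{H^1(M)}\,|\widetilde T|^{3/2}\|\widetilde T\|^{1/2}$ (plus lower order). Young's inequality then leaves a contribution $C\|T^*\|_{H^1(M)}^{4/3}|\widetilde T|^2$ on the right, which cannot be absorbed by the dissipation $\|\widetilde T\|^2\geq \widetilde K^{-1}|\widetilde T|^2$ without a smallness assumption on $T^*$; hence the claimed inequality $\od{}{t}|\widetilde T|^2+\delta|\widetilde T|^2\leq C(T^*,Q)$ does not follow as stated. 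The fix --- and what the form of the constants in the theorem reveals about the actual argument, since $\widetilde R_a$ in \eqref{R_tilde} involves only $\|T^*\|_{L^2(M)}$ and \eqref{R-A} has the shape $R_a=2\widetilde R_a+2\|T^*\|_{L^2(M)}^2$ of a triangle inequality --- is to perform the $L^2$ estimate on the original temperature $T=\widetilde T+T^*$ using \eqref{EQ-4}. There the transport terms are again orthogonal to $T$, the inhomogeneity $u\cdot\nabla T^*$ is absent, and the only trace of $T^*$ is the boundary term $\aa\int_{\Gg_u}TT^*$ coming from the Robin condition in \eqref{B-1}, which is handled by Cauchy--Schwarz on $\Gg_u$ and absorbed into $\|T\|^2$; one then recovers the bound on $|\widetilde T|^2$ from $|\widetilde T|^2\leq 2|T|^2+2H\|T^*\|_{L^2(M)}^2$. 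With that substitution your plan goes through.
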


\bigskip
\section{Analysis and Convergence of the Data Assimilation Algorithm} \label{convergence}

In this section, we derive conditions under which the solution $(q_s,v,\eta)$, of the data assimilation algorithm system (\ref{DA_TEQ-1})--(\ref{DA_TEQ-5}),  converges to the corresponding unique reference solution $(p_s, u,\widetilde{T})$ of the planetary geostrophic system (\ref{TEQ-1})--(\ref{TEQ-5}), at an exponential rate, as $t\rightarrow \infty$.

{\bf Remark:} The steps of the following proof are formal in the sense
that they can be made rigorous by proving their corresponding
counterpart estimates first for the Galerkin approximation system. Then the estimates for the exact
solution can be established by passing to the limit in the Galerkin
procedure by using the appropriate ``Compactness Theorems''.

\begin{theorem}\label{th_conv_1}
Suppose that $I_h$ satisfies the approximation property \eqref{app}. Let $(p_s(t), u(t),\widetilde{T}(t))$, for $t\geq 0$, be a strong solution in the global attractor of  \eqref{TEQ-1}--\eqref{TEQ-5}. Let $\eta_0 \in L^2(\Omega)$ and suppose that $\mu>0$ is large enough such that
\begin{align}\label{mu_1}
\mu\geq 2C\left(1 + 5 \widetilde{R}_a(T^*,Q) + 4 \norm{T^*}_{L^2(M)}^2 + \norm{T^*}_{H^1(M)}^{4/3}\right),
\end{align}
where $\widetilde{R}_a(T^*,Q)$ is a constant defined in \eqref{R_tilde}, and $h>0$ is small enough such that $\mu c_0^2 h^2\leq 1$. Then, for any $S>0$, system \eqref{DA_TEQ-1}--\eqref{DA_TEQ-5} has a unique weak solution $(q_s, v,\eta)$ on the time interval $[0,S]$ ($q_s$ is unique up to a constant, i.e., $\nabla q_s$ is unique) in the sense of Definition \ref{D-1}.

Moreover, the solution $(v,\eta)$ depends continuously on the initial data, and it satisfies $\norm{\eta(t)-\widetilde{T}(t)}_{L^2(\Omega)}^2 \rightarrow 0,$ and $\norm{v(t)-u(t)}_{H^{1}(\Omega)}^2 \rightarrow 0$, at an
exponential rate, as $t \rightarrow \infty$.
\end{theorem}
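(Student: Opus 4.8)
The plan is to pass to the error variables and to exploit the diagnostic (slaved) nature of the velocity in the planetary geostrophic system. Set $\theta := \eta - \widetilde{T}$, $V := v - u$, and $\pi := q_s - p_s$, and subtract the reference system \eqref{TEQ-1}--\eqref{TEQ-5} from the algorithm \eqref{DA_TEQ-1}--\eqref{DA_TEQ-5}. The velocity difference $V$ then solves the linear stationary (geostrophic, Stokes-type) system
\[
\nabla\Big[\pi - \int_{-H}^z \theta\,d\xi\Big] + f\vec{k}\times V + L_1 V = 0, \qquad \nabla\cdot\int_{-H}^0 V\,dz = 0,
\]
with the homogeneous boundary conditions obtained from \eqref{TEQ-3} and \eqref{DA_TEQ-3} (the wind stress $\tau$ cancels in the difference). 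Since this is exactly the stationary problem whose regularity theory is developed in \cite{PG_Cao_Titi, STW98}, I would invoke the associated elliptic estimates to obtain $\norm{V}_{H^1(\Om)} \le C\abs{\theta}$ and $\norm{V}_{H^2(\Om)} \le C\norm{\theta}$ (the forcing $\nabla\int_{-H}^z\theta$ is controlled in $L^2(\Om)$ by $\norm{\theta}$ and in the dual norm by $\abs{\theta}$). This slaving is precisely the mechanism by which measurements of temperature alone control the velocity, and it is what makes Charney's conjecture provable in this paradigm.

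Next I would derive the basic $L^2(\Om)$ energy identity for the temperature error, testing the $\theta$-equation with $\theta$. The dissipation $\ip{L_2\theta}{\theta}$ produces $\norm{\theta}^2$ in the norm \eqref{NORM} (the Robin boundary term on $\Gg_u$ is absorbed correctly after integration by parts). The nudging contributes $-\mu\,\ip{I_h(\theta)}{\theta}$, which I would rewrite as $-\mu\abs{\theta}^2 + \mu\,\ip{\theta - I_h(\theta)}{\theta}$; the approximation property \eqref{app}, Young's inequality, and the smallness condition $\mu c_0^2 h^2 \le 1$ then bound this by $-\tfrac{\mu}{2}\abs{\theta}^2 + \tfrac12\norm{\theta}^2$, the last term being absorbable into the dissipation.

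The core of the argument is the treatment of the advective terms. Writing $\eta = \widetilde{T} + \theta$ and introducing the vertical velocity error $W := -\nabla\cdot\int_{-H}^z V\,d\xi$ (the reference vertical velocity $w$ being given by \eqref{DIV-1}), the difference of the two transport operators splits into transport of $\theta$ by the reference three-dimensional divergence-free field $(u,w)$, which vanishes when tested against $\theta$; transport of $\theta$ by the divergence-free error field $(V,W)$, which also vanishes against $\theta$; and the genuinely lower-order remainder $\ip{V\cdot\nabla\widetilde{T}}{\theta} + \ip{W\,\pp_z\widetilde{T}}{\theta} + \ip{V\cdot\nabla T^*}{\theta}$. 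The horizontal terms I would estimate by the Ladyzhenskaya--Sobolev inequalities \eqref{SIT-2}--\eqref{SIT-3} (together with integration by parts to move derivatives off the reference temperature so that only its $L^2$-type attractor bound enters), and the delicate vertical term $\ip{W\,\pp_z\widetilde{T}}{\theta}$ is exactly of the form controlled by Proposition \ref{PROP}. In every case I would insert the elliptic bounds $\norm{V}_{H^1}\le C\abs{\theta}$, $\norm{V}_{H^2}\le C\norm{\theta}$ and apply Young's inequality to produce a contribution $\veps\norm{\theta}^2 + C\,g(t)\,\abs{\theta}^2$. All reference quantities are uniformly bounded by Theorem \ref{global_attractor}, so $g(t)$ is controlled by the combination $\widetilde{R}_a(T^*,Q)$, $\norm{T^*}_{L^2(M)}^2$ and $\norm{T^*}_{H^1(M)}^{4/3}$ appearing in \eqref{mu_1}, the exponent $4/3$ arising from a Young split $ab\le\veps a^4 + C_\veps b^{4/3}$ in the $T^*$-advection term.

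Collecting the estimates and absorbing every $\norm{\theta}^2$ contribution into the dissipation produced by $L_2$ leaves a differential inequality of the form $\od{}{t}\abs{\theta}^2 + \big(\mu - 2C(1 + 5\widetilde{R}_a(T^*,Q) + 4\norm{T^*}_{L^2(M)}^2 + \norm{T^*}_{H^1(M)}^{4/3})\big)\abs{\theta}^2 \le 0$, so hypothesis \eqref{mu_1} makes the coefficient of $\abs{\theta}^2$ strictly positive. Lemma \ref{gen_gron_2} (or, since the right-hand side vanishes, a direct Gr\"onwall argument) then yields $\abs{\theta(t)}^2\to 0$ at an exponential rate, and the elliptic bound $\norm{v-u}_{H^1(\Om)}^2 \le C\abs{\theta}^2$ transfers the decay to the velocity, giving both convergences claimed. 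Well-posedness of \eqref{DA_PG} on $[0,S]$---existence, uniqueness, and continuous dependence on $\eta_0$---would follow from these same a priori bounds through the Galerkin/compactness scheme flagged in the Remark preceding the theorem, running the identical energy estimate on the difference of two solutions of \eqref{DA_PG}. I expect the main obstacle to be the vertical-velocity terms: establishing the $H^2$ elliptic bound $\norm{V}_{H^2}\le C\norm{\theta}$ and combining it with Proposition \ref{PROP} delicately enough that the resulting $\norm{\theta}^2$ stays small and is absorbed by, rather than overwhelms, the dissipation. This balancing is exactly what forces both the largeness of $\mu$ in \eqref{mu_1} and the smallness condition $\mu c_0^2 h^2\le 1$ on the mesh.
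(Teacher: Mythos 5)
Your proposal is correct and follows essentially the same route as the paper's proof: the same error variables, the elliptic slaving bounds $\norm{V}_{H^1(\Om)}\le C\abs{\theta}$, $\norm{V}_{H^2(\Om)}\le C\norm{\theta}$, the cancellation of the transport of $\theta$ by the divergence-free fields, Proposition \ref{PROP} for the vertical term, the identical treatment of the nudging term under $\mu c_0^2h^2\le 1$, and the same Young splits producing the exponent $4/3$. The only small caveat is your parenthetical suggestion that a direct Gr\"onwall argument would suffice: since \eqref{mu_1} is calibrated to the \emph{time-averaged} attractor bound $\int_t^{t+1}\norm{\widetilde{T}(s)}^2\,ds\le 5\widetilde{R}_a(T^*,Q)+4\norm{T^*}_{L^2(M)}^2$ rather than a pointwise one, the damping coefficient is only positive on average and Lemma \ref{gen_gron_2} is genuinely needed, exactly as in the paper.
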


\begin{proof} We consider the difference between the reference solution and the approximate solution,  (\ref{TEQ-1})--(\ref{TEQ-5}) and (\ref{DA_TEQ-1})--(\ref{DA_TEQ-5}), respectively.  We provide here  the relevant {\it a priori} estimates to show simultaneously the global well-posedness and the convergence results.
Denote by
$U =v - u$,
$\chi =\eta-\widetilde{T},$ and
$P_s = q_s - p_s.$
Then,  $P_s$, $U$ and $\chi$ satisfy:
\begin{subequations}
\begin{align}
&
\nabla  \left[ P_s(x,y,t)
 - \int_{-H}^z \chi (x,y,\xi,t) d\xi
\right] + f \vec{k} \times U + L_1 U = 0,
  \label{WTEQ-1}  \\
&
\nabla \cdot \int_{-H}^0 U(x,y,z,t) \; dz = 0,    \label{WTEQ-11}     \\
&
\pp_t \chi  + L_2 \chi + U
 \cdot \nabla \widetilde{T} + v \cdot \nabla \chi
+   U  \cdot \nabla T^*   -\left( \nabla \cdot \int_{-H}^z  U(x,y,\xi,t) \,d\xi  
\right)  \frac{\pp \widetilde{T}}{\pp z} - \nonumber \\
&
\qquad \qquad \qquad \qquad \qquad  -\left( \nabla \cdot \int_{-H}^z  v(x,y,\xi,t) \,d\xi
\right)  \frac{\pp \chi}{\pp z}  - \mu I_h(\chi)=0,
   \label{WTEQ-3}   \\
&
\left. \frac{\pp U }{\pp z} \right|_{z=0} = 0, \;
\left. \frac{\pp U }{\pp z} \right|_{z=-H} = 0, \;
\left. U \cdot \vec{n} \right|_{\Gg_s} = 0, \;
\left. \frac{\pp U}{\pp \vec{n}}  \times \vec{n}\right|_{\Gg_s}  =0,
\label{WTEQ-4} \\
&
\left. {\left( \frac{\pp \chi}{\pp z} + \frac{\aa}{K_v}  \chi
 \right)} \right|_{z=0}= 0; \;
\left. \frac{\pp \chi}{\pp z}  \right|_{z=-H}= 0;
\; \left. \frac{\pp \chi}{\pp \vec{n}}  \right|_{\pp M}= 0,
 \label{WTEQ-5} \\
&
\chi (x,y,z,0) =\eta_0 (x,y,z)- \widetilde{T}_0 (x,y,z).
\label{WTEQ-6}
\end{align}
\end{subequations}
Next, we follow the ideas and arguments in \cite{PG_Cao_Titi}.   By averaging (\ref{WTEQ-1}) and (\ref{WTEQ-3})
with respect to $z$ and using (\ref{WTEQ-11}), we get
\begin{subequations}
\begin{align}
&
\nabla \left[ P_s(x,y,t) + \frac{1}{H} \int_{-H}^0
\xi \chi (x,y,\xi,t) \; d\xi \right]
+ f \vec{k} \times \overline{U} -  A_h \Dd \overline{U}
=0,       \label{WQ-1}   \\
&
\nabla \cdot  \overline{U} = 0,
     \label{WQ-2}  \\
&
\overline{U} \cdot \vec{n} = 0, \quad
\frac{\pp \overline{U}}{\pp \vec{n}} \times \vec{n} =0,
\qquad \qquad \mbox{ on } \pp M,
 \label{WQ-3}
\end{align}
\end{subequations}
where for any integrable function $\phi$ on $\Omega$ we denote by
\[
\overline{\phi} (x,y,t) = \frac{1}{H} \int_{-H}^0 \phi(x,y,z,t) \; dz.
\]
By taking the $L^2(\Om)$ inner product to equation (\ref{WQ-1}) with
$\overline{U}$, we obtain
\[
\int_{\Om} \left[ { \nabla \left( P_s(x,y,t) + \frac{1}{H} \int_{-H}^0
\xi \chi (x,y,\xi,t) \; d\xi \right)
-  A_h \Dd \overline{U} \,
} \right] \; \overline{U} \; dxdydz =0.
\]
By using integration by parts and applying (\ref{WQ-2}) and
(\ref{WQ-3}), we get
\[
\int_{\Om} |\nabla \overline{U}|^2 \; dxdydz =0.
\]
Thus, $\overline{U}$ is a function of $t$ alone. By (\ref{WQ-3}), we reach
$\overline{U} =0.$ As a result, we have
\begin{eqnarray}
&&\hskip-.68in
P_s (x,y,t) = - \frac{1}{H} \int_{-H}^0
\xi \, \chi (x,y,\xi,t) d\xi.    \label{SL-2}
\end{eqnarray}
($P_s$ is unique up to a constant that depends on time, thus $\nabla P_s$ is unique).
Therefore, (\ref{WTEQ-1}) can be written as
\begin{eqnarray}\label{eq_rew}
&&
- \nabla  \left[ \frac{1}{H} \int_{-H}^0
\xi \,  \chi (x,y,\xi,t) d\xi
 + \int_{-H}^z \chi (x,y,\xi,t) d\xi
\right] +  \nonumber \\
&& + f \vec{k} \times U + L_1 U = 0.
 \label{WTEQ-111}
\end{eqnarray}
Notice that $U$ satisfies the boundary condition (\ref{WTEQ-4}).
For the second order elliptic boundary--value problem \eqref{eq_rew}
we have the following regularity results (by following similar
techniques to those developed in {\cite{Hu-Temam-Ziane}} and
{\cite{Ziane}}. For the case of smooth domains see,
  for example, { \cite{LADY}} p. 89,
 and {\cite{US75}})
\begin{equation}
\|U\|_{H^1(\Om)} \leq \frac{C_2}{ \widetilde{A}} |\chi|,
\qquad \mbox{ and } \quad
\|U\|_{H^2(\Om)} \leq  \frac{C_2}{ \widetilde{A}} \|\chi\|,
\label{UR}
\end{equation}
where $\widetilde{A} = \min\{A_h, A_v\}$.
By taking the $H^{\prime}$ dual action of
equation (\ref{WTEQ-3}) with $\chi$, we obtain
\begin{align*}
&
\ang{\pp_t \chi  + L_2 \chi, \chi }
+ \ang{ U \cdot \nabla \widetilde{T} + v \cdot \nabla
\chi
+  U  \cdot \nabla T^* , \chi } -  \\
&
- \ang{( \nabla \cdot \int_{-H}^z  U(x,y,\xi,t) \,d\xi  )
\pp_z \widetilde{T} -
 ( \nabla \cdot \int_{-H}^z  v(x,y,\xi,t) \,d\xi
)  \pp_z \chi, \chi} -\mu\ang{I_h(\chi), \chi}=0.
\end{align*}

Notice that by integrating by parts and  using the
boundary conditions  (\ref{WTEQ-5}),  we have
\begin{align}
&\int_{\Om} \chi L_2 \chi
\; dxdydz = - \int_{\Om} \chi \left(
K_h \Dd \chi + K_v \pp^2_z \chi
\right) \; dx dydz      \nonumber \\
&=  \int_{\Om} \left[ K_h |\nabla \chi|^2 +
K_v |\pp_z \chi|^2 \right] \, dxdydz
-\int_{\Gg_u} K_v \chi \pp_z \chi dx dy
 \nonumber   \\
&= \int_{\Om} \left[ K_h |\nabla \chi|^2 +
K_v |\pp_z \chi|^2 \right] \, dxdydz
+ \aa \int_{\Gg_u} |\chi|^2 dx dy  \nonumber  \\
&= \| \chi\|^2. \label{POS}
\end{align}

We use the facts that
\[
\ang{\pp_t \chi, \chi } = \frac{1}{2} \frac{d
  |\chi|^2}{dt} \quad  \; \;
\ang{L_2 \chi, \chi } = \|\chi \|^2, \quad \mbox{and} \; \; \mu\ang{I_h(\chi), \chi} = \mu(I_h(\chi), \chi).
\]
Moreover,
\begin{align*}
&
\ang{ U \cdot \nabla \widetilde{T} + v \cdot \nabla
\chi
+  U  \cdot \nabla T^* , \chi }  =
\int_{\Om} \left[{ U \cdot \nabla \widetilde{T}
+ v\cdot \nabla \chi
+   U  \cdot \nabla T^* }\right] \chi  \; dx dydz, \\
&
\ang{\left( \nabla \cdot \int_{-h}^z  U(x,y,\xi,t) \,d\xi  \right)
\pp_z \widetilde{T} +
 \left( \nabla \cdot \int_{-H}^z  v(x,y,\xi,t) \,d\xi
\right)  \pp_z \chi, \chi}  \; dx dydz \\
&
= \int_{\Om} \left[{  \left( \nabla \cdot \int_{-H}^z  U(x,y,\xi,t)
\,d\xi
\right)  \pp_z \widetilde{T} +
 \left( \nabla \cdot \int_{-H}^z  v(x,y,\xi,t) \,d\xi
\right)  \pp_z \chi }\right] \chi  \; dx dydz.
\end{align*}
 Therefore, we have
\begin{align*}
&
\frac{1}{2} \frac{d |\chi|^2}{dt}  + \| \chi \|^2  =
\int_{\Om} \left[{ - U
 \cdot \nabla \widetilde{T} - v\cdot \nabla \chi
-   U  \cdot \nabla T^* }\right. + \\
&
 \left.{ + \left( \nabla \cdot \int_{-H}^z  U(x,y,\xi,t) \,d\xi
\right)  \pp_z \widetilde{T} +
 \left( \nabla \cdot \int_{-H}^z  v(x,y,\xi,t) \,d\xi
\right)  \pp_z \chi }\right] \chi.
\end{align*}
Next, we estimate in the above equation term by term.
\begin{itemize}
\item[{\it (I.)}] By integrating by parts and (\ref{WTEQ-4}), we reach
\begin{equation}
\int_{\Om} \left[{  v\cdot \nabla \chi -
\left( \nabla \cdot \int_{-H}^z  v(x,y,\xi,t) \,d\xi
\right)  \pp_z \chi  }\right] \chi  \; dx dydz=0.
 \label{W-2}
\end{equation}
\item[{\it (II.)}]
\[
\left|{ \int_{\Om}  U \cdot \nabla \left( \widetilde{T} +T^* \right)
\chi  \; dx dydz} \right|  \leq  \| \widetilde{T} +T^*\|_{H^1(\Om)}\;
 \| U \|_{L^6(\Om)}
\|\chi \|_{L^3(\Om)}.
\]
Applying (\ref{UR}) and (\ref{SIT-3}), we have
\[
\| U \|_{L^6(\Om)} \leq \frac{C}{ \widetilde{A}}  | \chi |,
\quad \mbox{and} \; \;
\|\chi \|_{L^3(\Om)} \leq C  | \chi |^{1/2}
 \| \chi \|^{1/2}.
\]
Thus,
\begin{align}
&\left|{ \int_{\Om}  U \cdot \nabla \left( \widetilde{T} +T^* \right)
\chi  }\right|    \leq C \left[ \| \widetilde{T} \|+
\| T^* \|_{H^1(M)}\right] | \chi |^{\frac{3}{2}}
 \| \chi \|^{\frac{1}{2}}.    \label{W-3}
 \end{align}
\item[{\it (III.)}]~Applying Proposition \ref{PROP} by setting $u=U, f=\pp_z \widetilde{T}$ and $g=\chi $,
respectively,  we have
\begin{align*}
& \left|{  \int_{\Om}
     \left(  \nabla \cdot \int_{-H}^z  U(x,y,\xi,t) \,d\xi
     \right)  \pp_z \widetilde{T} \chi
  }\right|  \\
&
\leq C \left\| \widetilde{T} \right\|_{H^1(\Om)} \|U\|_{H^1(\Om)}^{1/2} \|U\|_{H^2(\Om)}^{1/2}
\left\| \chi  \right\|
\left| \chi \right|^{1/2}
\end{align*}
Applying (\ref{UR}) to the above estimate, we get
\begin{align}
& \left|{ \int_{\Om} \left( \nabla \cdot \int_{-H}^z  U(x,y,\xi,t) \,d\xi
\right)  \pp_z \widetilde{T} \; \chi   dxdydz }\right|
  \leq C \| \widetilde{T} \|
| \chi | \| \chi \|.  \label{W-4}
\end{align}
\item[{\it (IV.)}]~Finally, thanks to the assumptions $\mu c_0^2h^2\leq 1$, \eqref{app}, and Young inequality, we have
\begin{align}
-\mu(I_h(\chi), \chi) &= - \mu(I_h(\chi)-\chi, \chi) - \mu |\chi|^2 \notag \\
& \leq \mu c_0 h \norm{\chi} |\chi| - \mu |\chi|^2 \notag \\
&\leq \frac{\mu c_0^2 h^2}{2} \| \chi\|^2 - \frac{\mu}{2} | \chi|^2 \notag \\
& \leq  \frac{1}{2} \| \chi\|^2 - \frac{\mu}{2} | \chi|^2. \label{W-5}
\end{align}
\end{itemize}

Therefore, from the above estimates (\ref{W-2})--(\ref{W-5})
we get
\begin{align*}
&
\frac{1}{2} \frac{d |\chi|^2}{dt}  + \frac{1}{2}\|\chi\|^2
\leq C ( \| \widetilde{T} \|+
\| T^* \|_{H^1(M)} ) | \chi |^{\frac{3}{2}} \| \chi
\|^{\frac{1}{2}}+  C \| \widetilde{T} \|
 | \chi | \| \chi \| - \frac{\mu}{2} | \chi|^2.
\end{align*}
By Young inequality, we obtain
\begin{align*}
&
\frac{d |\chi|^2}{dt}  + \|\chi\|^2
\leq \left[C \left(1+ \| \widetilde{T} \|^2+
\| T^* \|_{H^1(M)}^{4/3} \right) -\mu \right] | \chi |^2.
\end{align*}

Recall that from Proposition \ref{global_attractor} we have
$$ \int_{t}^{t+1} \norm{\widetilde{T}(s)}^2 \, ds \leq 2R_a(T^*,Q) + \widetilde{R}_a(T^*,Q)= 5 \widetilde{R}_a(T^*,Q) + 4 \norm{T^*}_{L^2(M)}^2,$$
for any $t\geq 0$, $\widetilde{R}_a(T^*,Q)$ is a constant defined in \eqref{R_tilde}. Thus, applying Lemma \ref{gen_gron_2} with $\tau =1$, and using condition \eqref{mu_1}, we can conclude that condition \eqref{cond_1} is satisfied, therefore we have
\begin{align}
\norm{\eta(t) - \widetilde{T}(t)}_{L^2(\Omega)}^2 = |\chi(t)|^2 \rightarrow 0, 
\end{align}
at at exponential rate, as $t\rightarrow \infty$. The regularity result \eqref{UR} yields
$$\norm{v(t)-u(t)}_{H^1(\Omega)}^2 = \norm{U(t)}_{H^1(\Omega)}^2 \leq \frac{C_2^2}{{\widetilde{A}}^2} |\chi(t)|^2 \rightarrow 0, $$
at an exponential rate, as $t\rightarrow \infty$.

\end{proof}

\bigskip
\section*{Acknowledgements}

The work of A.F. is supported in part by NSF grant  DMS-1418911. The work of E.L. is supported by the ONR grants N0001416WX01475 and N0001416WX00796.  The work of  E.S.T.  is supported in part by the ONR grant N00014-15-1-2333 and the NSF grants DMS-1109640 and DMS-1109645.

\bigskip

\frenchspacing
\bibliographystyle{plain}





\end{document}